\documentclass[12pt]{amsart}
\usepackage[matrix,arrow,curve]{xy}
\setcounter{tocdepth}{1}
\usepackage{euscript, amssymb, array}
\makeatletter 
\@addtoreset{equation}{subsection} 
\makeatother
\pagestyle{plain}

\textwidth=12.5cm 
\textheight=21cm 
\sloppy

\newcommand{\type}[1]{$\mathrm{(#1)}$}

\newcommand{\wt}{\operatorname{wt}}

\newcommand{\muu}{\mbox{\boldmath $\mu$}}
\newcommand{\Cl}{\operatorname{Cl}}

\newcommand{\Clsc}{\operatorname{Cl^{\operatorname{sc}}}}
\newcommand{\gr}{\operatorname{gr}}
\newcommand{\red}{\operatorname{red}}

\newcommand{\q}{\operatorname{q}}
\newcommand{\Supp}{\operatorname{Supp}}
\newcommand{\Sing}{\operatorname{Sing}}

\newcommand{\mt}[1]{\operatorname{#1}}

\newcommand{\OOO}{\mathcal{O}}

\newcommand{\FFF}{\mathcal{F}}
\newcommand{\CC}{\mathbb{C}}

\newcommand{\PP}{\mathbb{P}}
\newcommand{\QQ}{\mathbb{Q}}

\newcommand{\xref}[1]{{\rm \ref{#1}}}

\newcommand{\comment}[1]{}

\newcounter{THN}[section]
\renewcommand{\theTHN}
{(\arabic{section}.\arabic{subsection})}
\newcounter{THNO}[section]
\renewcommand{\theTHNO}
{(\arabic{section}.\arabic{subsection}.\arabic{equation})}

\newenvironment{mparag}[1]{
\setcounter{THN}{\value{subsection}}
\refstepcounter{subsection}\refstepcounter{THN}
\par\medskip\noindent\begingroup \rm
{\bf\theTHN\ #1\ }}{\par\smallskip\endgroup}

\newenvironment{mtparag}[1]{
\setcounter{THN}{\value{subsection}}
\refstepcounter{subsection}\refstepcounter{THN}
\par\medskip\noindent\begingroup \it
{\bf\theTHN\ #1\ }}{\par\smallskip\endgroup}

\newenvironment{parag}[1]{
\setcounter{THNO}{\value{equation}}
\refstepcounter{equation}\refstepcounter{THNO}
\par\medskip\noindent\begingroup \rm
{\bf\theTHNO\ #1\ }}{\par\smallskip\endgroup}

\newenvironment{tparag}[1]{
\setcounter{THNO}{\value{equation}}
\refstepcounter{equation}\refstepcounter{THNO}
\par\medskip\noindent\begingroup \it
{\bf\theTHNO\ #1\ }}{\par\smallskip\endgroup}

\newenvironment{example}{\begin{parag}{Example.}}{\end{parag}}

\newenvironment{lemma}{\begin{tparag}{Lemma.}}{\end{tparag}}
\newenvironment{claim}{\begin{tparag}{Claim.}}{\end{tparag}}

\newenvironment{emptytheorem}{\begin{tparag}{}}{\end{tparag}}
\newenvironment{corollary}{\begin{tparag}{Corollary.}}{\end{tparag}}

\newenvironment{theoremm}{\begin{mtparag}{Theorem.}}{\end{mtparag}}

\newenvironment{propositionm}{\begin{mtparag}{Proposition.}}{\end{mtparag}}
\newenvironment{definitionm}{\begin{mparag}{Definition.}}{\end{mparag}}

\title{On $\QQ$-conic bundles, II}
\author{Shigefumi Mori}
\author{Yuri Prokhorov}
\thanks {
The research of the first author was supported in part by JSPS Grant-in-Aid
for Scientific Research (B)(2), No. 16340004.
The second author was
partially supported by grants CRDF-RUM, No. 1-2692-MO-05 and
RFBR, No. 05-01-00353-a, 06-01-72017.}
\address{Shigefumi Mori: RIMS, 
Kyoto University, Oiwake-cho, Kitashirakawa, Sakyo-ku, Kyoto
606-8502, Japan}
\email{mori@kurims.kyoto-u.ac.jp}
\address{Yuri Prokhorov: Department 
of Algebra, Faculty of Mathematics, Moscow State
University, Moscow 117234, Russia}
\email{prokhoro@mech.math.msu.su}



\begin{document}
\maketitle

\begin{abstract}
A $\mathbb Q$-conic bundle germ is a proper morphism from a threefold
with only terminal singularities to the germ $(Z \ni o)$ of a normal 
surface such that fibers are connected and the anti-canonical 
divisor is relatively ample. 
We obtain the complete classification of $\mathbb Q$-conic
bundle germs when the base surface germ is singular.
This is a generalization of \cite{Mori-Prokhorov-2006},
which further assumed that the fiber over $o$ is irreducible.
\end{abstract}

\section{Introduction}
This note is a continuation of our previous work \cite{Mori-Prokhorov-2006}
where we studied the local structure of $\QQ$-conic bundles.
\begin{definitionm}
A \textit{$\QQ$-conic bundle} is a projective morphism
$f\colon X \to Z$ from a threefold with only terminal singularities to a
surface such that
\begin{enumerate}
\item
$f_*\OOO_X=\OOO_Z$ and all fibers are one-dimensional,
\item
$-K_X$ is $f$-ample.
\end{enumerate}
For $f\colon X\to Z$ as above and for a point $o\in Z$, we call the
analytic germ $(X, f^{-1}(o)_{\red})$ a \textit{$\QQ$-conic
bundle germ}.
\end{definitionm}
In \cite{Mori-Prokhorov-2006} we completely classified $\QQ$-conic bundle germs 
over a singular base and such that the central fiber is irreducible.
For convenience of quotations we reproduce 
briefly  the classification.
For more detailed explanations we refer to the original paper
\cite{Mori-Prokhorov-2006}.

\begin{theoremm}
\label{th-MP-06}
Let $f\colon (X,C)\to (Z,o)$ be a $\QQ$-conic bundle germ, where
$C$ is irreducible and $(Z,o)$ is singular. Then  we are in one of the 
following cases\textup:
\par
\begin{center}
\renewcommand{\extrarowheight}{4pt}
\begin{tabular}{|l|l|p{160pt}|l|}
\hline
\textup{Type}&\textup{No.
}&{\rm singularities}& $(Z,o)$
\\[7pt]
\hline
{\rm toroidal}& \textup{(1.2.1)}&  $\frac1n (1,a,-a)$ \textup{and} $\frac1n (-1,a,-a)$,
$\gcd(n,a)=1$&$A_{n-1}$
\\
\type{IA}\textup+\type{IA}& \textup{(1.2.2)}&
$\frac1n (a,-1,1)$ \textup{and} $\frac1n (a+1,1,-1)$, $n=2a+1$
&$A_{n-1}$
\\
\type{IE^\vee}& \textup{(1.2.3)}&$\frac18(5,1,3)$&$A_{3}$
\\
\type{ID^\vee}& \textup{(1.2.4)}&$cA/2$ \textup{or} $cAx/2$&$A_{1}$
\\
\type{IA^\vee}& \textup{(1.2.5)}&$\frac14(1,1,3)$\quad  \textup(\textup+\type{III}\textup)&$A_{1}$
\\
\type{II^\vee}& \textup{(1.2.6)}&$cAx/4$\quad  \textup(\textup+\type{III}\textup)&$A_{1}$
\\
\hline
\end{tabular}
\end{center}
\end{theoremm}

In this paper we consider the case where the base surface is singular
and the central fiber is reducible.
Our main result is the following.
\begin{theoremm}
\label{th-m}
Let $f\colon (X,C)\to (Z,o)$ be a $\QQ$-conic bundle
germ.
Assume that $C$ 
is reducible and the base surface $(Z,o)$ is singular. 
Then 
$(Z,o)$ is Du Val of type $A_1$ and 
$(X,C)$ is the $\muu_2$-quotient of the index-two $\QQ$-conic bundle 
$f'\colon (X',C')\to (Z',o')$ over a smooth base, where 
$\muu_2$ acts on $(Z',o')$ freely in codimension one.
Moreover, $C'$ has four irreducible components, $\muu_2$ does not fix
any of 
them and $X$ has a unique non-Gorenstein point $P$. 
Furthermore, 
$X'$ is given by the following two equations in
$\PP(1,1,1,2)_{y_1,\dots,y_4}\times \CC^2_{u,v}$
\[
\left\{
\begin{array}{lll}
y_1^2-y_3^2&=&\psi_1(y_1,\dots,y_4;u,v),
\\[7pt]
y_2^2-y_3^2&=&\psi_2(y_1,\dots,y_4;u,v),
\end{array}
\right.
\]
where  $\muu_{2}$ acts as follows\textup:
\[
(y_1, y_2, y_3, y_4; u, v) \longmapsto 
(-y_1, -y_2, y_3, -y_4; -u, -v). 
\]
Here $\psi_i=\psi_i(y_1,\dots, y_4; u, v)$ 
are weighted quadratic in $y_1,\dots,y_4$
with respect to $\wt (y_1,\dots,y_4)=(1,1,1,2)$ and
$\psi_i(y_1,\dots,y_4;0,0)=0$. 
The following are the only possibilities\textup:
\begin{emptytheorem}
\label{item=main--th-cyclic-quo}
$(X,P)$ is a cyclic quotient singularity of type $\frac14(1,1,-1)$
and for any component $C_i\subset C$ germ $(X,C_i)$ is of type \type{IA^\vee},
\end{emptytheorem}

\begin{emptytheorem}
\label{item=main--th-cAx/4}
$(X,P)$ is a singularity of type $cAx/4$
and for any component $C_i\subset C$ germ $(X,C_i)$ is of type \type{II^\vee}.
\end{emptytheorem}
Conversely, if the quotient $(X,C)=(X',C')/\muu_2$, where $(X',C')$
and the action of $\muu_2$ are as above, has 
only terminal singularities, then $(X,C)$ is a conic bundle germ 
over $\CC^2_{u,v}/\muu_2$ with reducible central fiber $C$.
\end{theoremm}
Below are a series of explicit examples of $\QQ$-conic bundles as in 
\ref{th-m}.
\begin{example}
\label{example-4}
Consider
the subvariety $X'\subset\PP(1,1,1,2)\times \CC^2$ defined by
the following two equations:
\[
\left\{
\begin{array}{lll}
y_1^2-y_3^2+u^{2k+1}y_4+v^2y_2 ^2&=&0,
\\[7pt]
y_2^2-y_3^2+vy_4&=&0.
\end{array}
\right.
\]
The projection $f'\colon X'\to\CC^2$ is a $\QQ$-conic bundle of
index $2$ (cf. \cite[12.1.3]{Mori-Prokhorov-2006}). 
Define the action of $\muu_2$ on $X'$ as follows
\[
(y_1,\ y_2,\ y_3,\ y_4;\ u,\ v) \longmapsto
(- y_1,\ - y_2,\ y_3,\ - y_4;\ -u,\ -v).
\]
Then $X'/\muu_{2}\to
\CC^2/\muu_2$ is a $\QQ$-conic bundle
with a unique non-Gorenstein point $P$.
The point $P$ is of type \ref{item=main--th-cyclic-quo}
if $k=0$ and of type \ref{item=main--th-cAx/4} if $k\ge 1$.
\end{example}

The basic idea of the proof is to 
reduce 
the problem 
of classifying $\QQ$-conic bundles $(X,C)$ as in Theorem \ref{th-m}
to the case 
where the central fiber is irreducible by applying the MMP to a
$\QQ$-factorialization $(X^{\q}, C^{\q})$.
Then the resulting $\QQ$-conic bundle $(\bar X, \bar C)$ belongs to
the list 
\ref{th-MP-06}. 
We trace back from $(\bar X, \bar C)$ to $(X,C)$. 
It turns out that in many cases the steps of the MMP 
do not affect the singularities of $(\bar X, \bar C)$.
Here we use some results about divisorial contractions and flips
(see \S \ref{sect-1})
based on \cite{Kollar-Mori-1992} and \cite{Kawamata-1996}.
Then the base change trick allows us to show that $(X,C)$
is a $\muu_2$-quotient of
an index-two conic bundle, see \S \ref{sect2}.

\subsection*{Acknowledgments}
The work was carried out at Research Institute for Mathematical
Sciences (RIMS), Kyoto University. The second author would like to
thank RIMS for invitations to work there in February 2007, for
hospitality and wonderful conditions of work.

\section{Preliminary results on extremal contractions}
\label{sect-1}
\begin{mparag}{}\label{lemma-singul-cA/2}
Let $(E^\sharp,P^\sharp)$ be a Du Val singularity.  (We assume that 
$(E^\sharp,P^\sharp)$ is \textit{singular}).
Assume that $\muu_m$ acts on $E^\sharp$ freely outside $P^\sharp$
and the quotient  $(E,P)=(E^\sharp,P^\sharp)/\muu_m$ is also Du Val.
Then there is a $\muu_m$-equivariant embedding 
$(E^\sharp,P^\sharp)\subset (\CC^3_{x,y,z},0)$ such that $x$, $y$, $z$
and the equation of $E^\sharp$  are semi-invariant.
Let $F^\sharp\subset \CC^3$ be the locus of points at which 
the action of $\muu_m$ is not free.
By our assumption $F^\sharp$ is a curve.
Define the invariant $\varsigma(E^\sharp,P^\sharp,\muu_m)$ 
as the local intersection number 
$(E^\sharp\cdot  F^\sharp)_0$. 
According to \cite[4.10]{Reid-YPG1987} we have only the following cases:
\renewcommand{\extrarowheight}{4pt}
\begin{equation}
\label{eq-table}
\begin{array}{|l|l|l|}
\hline
m\hspace{20pt}\quad&(E^\sharp,P^\sharp)\to (E,P)\hspace{40pt}\quad &
\varsigma(E^\sharp,P^\sharp,\muu_m)\hspace{30pt}\quad
\\[7pt]
\hline
\text{any}& A_{r-1}\to A_{mr-1} & r
\\
4&A_{2r-2}\to D_{2r+1}&2r-1
\\
2&A_{2r-1}\to D_{r+2}& 2
\\
3&D_4\to E_6&2
\\
2&D_{r+1}\to D_{2r}&r
\\
2&E_6\to E_7&3
\\
\hline
\end{array}
\end{equation}

\begin{parag}{}
Let $(W,P)$ be a three-dimensional terminal singularity of index $m>1$ and 
let $E\in |-K_W|$ be a divisor having a Du Val singularity at $P$.
Assume that $(W,P)$  is not a cyclic quotient. 
Let $\pi\colon (W^\sharp, P^\sharp)\to (W,P)$ be the index-one $\muu_m$-cover
and let $(W^\sharp,P^\sharp)=\{\phi=0\}
\subset \CC^4_{x_1,x_2,x_3,x_4}$ 
be a $\muu_m$-equivariant embedding.
Let $E^\sharp:=\pi^{-1}(E)$ and 
$F^\sharp\subset \CC^3$ be the locus of points at which the action of $\muu_m$ is not free.
Since $\pi$ is free in codimension two, $F^\sharp$ is a curve.
Recall that the local intersection number $(W^\sharp\cdot F^\sharp)_0$
is called the \textit{axial multiplicity} of $(W,P)$ \cite[1a.5]{Mori-1988}.
We denote it by $\mt{am}(W,P)$.
By the classification of terminal singularities we may assume that
$F^\sharp$ is the $x_4$-axis, and either 
$\wt(x_1,x_2,x_3,x_4,\phi)\equiv (1,-1,a,0,0)\mod m$, 
or $m=4$ and $\wt(x_1,x_2,x_3,x_4,\phi)\equiv (1,-1,a,2,2)\mod 4$, 
where $\gcd(a,m)=1$.
Since $(E^\sharp,P^\sharp)$ is a Du Val singularity, 
its Zariski tangent space  at 
the origin is three-dimensional. 
Hence there is a $\muu_m$-stable 
hypersurface $H^\sharp\subset \CC^4$
such that $E^\sharp=H^\sharp\cap W^\sharp$ and $H^\sharp$ is smooth.
\end{parag}

\begin{claim}
$F^\sharp\subset H^\sharp$.
\end{claim}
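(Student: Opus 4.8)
The plan is to turn the claim into a weight computation. Since $H^\sharp$ is smooth and $\muu_m$-stable, I would take its defining equation $h\in\CC[x_1,\dots,x_4]$ to be a semi-invariant of a single weight $w$, so that $H^\sharp=\{h=0\}$ and $E^\sharp=H^\sharp\cap W^\sharp$. Because $F^\sharp$ is the $x_4$-axis $\{x_1=x_2=x_3=0\}$, the inclusion $F^\sharp\subset H^\sharp$ is equivalent to $h(0,0,0,x_4)\equiv0$, i.e.\ to the vanishing of the coefficient of every pure power $x_4^k$ in $h$. A monomial $x_4^k$ is the only kind surviving restriction to $F^\sharp$, and it carries weight $k\,w_4$; so the whole claim reduces to showing that no power of $x_4$ can occur in a semi-invariant of weight $w$. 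For this I must first identify $w$.

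To compute $w$, recall that $\omega_{W^\sharp}$ is generated by the residue form $\eta=\operatorname{Res}\bigl(dx_1\wedge\dots\wedge dx_4/\phi\bigr)$, of weight $w_1+w_2+w_3+w_4-\wt\phi$. In the first case this is $1+(-1)+a+0-0=a$, and in the $cAx/4$ case $1+(-1)+a+2-2=a$, so $\eta$ has weight $a$ in both situations. Since $\pi$ is crepant and \'etale in codimension one and $E\in|-K_W|$, the preimage $E^\sharp=\pi^{-1}(E)$ is cut out on $W^\sharp$ by the pullback of the section of $\OOO_W(-K_W)$ defining $E$; writing this pullback as $h\cdot\eta^{-1}$ and using that, being a pullback, it is $\muu_m$-invariant, I get $\wt h-a\equiv0$, that is $\wt h\equiv a\pmod m$.

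It remains to restrict $h$ to $F^\sharp$. A power $x_4^k$ can appear in $h$ only if $k\,w_4\equiv a\pmod m$. In the first case $w_4=0$, so this would force $a\equiv0$, contradicting $\gcd(a,m)=1$ (as $m>1$); in the $cAx/4$ case $w_4=2$ and $m=4$, so $k\,w_4=2k$ is even while $a$ is odd, and again no such $k$ exists. Hence $h|_{F^\sharp}\equiv0$, which is exactly $F^\sharp\subset H^\sharp$. (Replacing $h$ by $h+c\phi$, the only freedom in the ambient equation, changes neither the weight nor, since $\operatorname{mult}_0\phi\ge2$, the smoothness, so nothing depends on the choice of $H^\sharp$.) I expect the one genuinely delicate step to be the equivariant weight bookkeeping in the second paragraph---making sure the pullback section really carries the trivial character, so that $\wt h\equiv a$ with no spurious twist; once that residue computation is pinned down, the monomial argument is immediate.
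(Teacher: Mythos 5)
Your proof is correct and follows the same route as the paper: the defining equation of $H^\sharp$ is semi-invariant of weight $\equiv a \pmod m$, hence contains no pure power of $x_4$ and so vanishes on the $x_4$-axis $F^\sharp$. The only difference is that you spell out the residue/adjunction computation behind $\wt h\equiv a$, which the paper simply asserts.
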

\begin{proof}
Let $\psi$ be the $\muu_m$-semi-invariant equation of  $H^\sharp$.
Then  $\wt \psi \equiv a$. 
Hence $\psi$ does not contain terms $x_4^k$
and so it vanishes on $F^\sharp$.
\end{proof}

\begin{parag}{}
We define  the invariant $\varsigma(W,E,P)$ as the local intersection number 
$(E^\sharp\cdot  F^\sharp)_0$ inside $H^\sharp$.
Clearly it coincides with $\varsigma(E^\sharp,P^\sharp,\muu_m)$
defined above.
\end{parag}

\begin{lemma}
Assume that $(W,P)$ is not a cyclic quotient singularity.
The invariant $\varsigma(W,E,P)$ does not depend on the choice of $E$ 
and $\varsigma(W,E,P)= \mt{am}(W,P)$.
\end{lemma}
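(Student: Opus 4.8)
The plan is to reduce both invariants to a single vanishing order along $F^\sharp$. First I would invoke the Claim to record that $F^\sharp\subset H^\sharp$, and then observe that, since $H^\sharp$ is smooth and three-dimensional while $W^\sharp=\{\phi=0\}$ is the hypersurface cutting it out, the Du Val divisor $E^\sharp=W^\sharp\cap H^\sharp$ is the effective Cartier divisor on $H^\sharp$ whose local equation is the restriction $\phi|_{H^\sharp}$. I would also note that $F^\sharp\not\subset W^\sharp$: otherwise $\phi$ would vanish identically on the $x_4$-axis, forcing the whole fixed curve into $W^\sharp$ and contradicting the finiteness of the axial multiplicity of a terminal point. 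In particular $\phi|_{F^\sharp}\ne 0$, so the intersection numbers in question are finite.

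Next I would compute $\varsigma(W,E,P)=(E^\sharp\cdot F^\sharp)_0$ inside $H^\sharp$. As $F^\sharp$ is a smooth curve lying in $H^\sharp$ and not contained in $E^\sharp$, this local intersection number is the colength $\dim_\CC\OOO_{F^\sharp,0}/(\phi|_{H^\sharp}|_{F^\sharp})$. Since $F^\sharp\subset H^\sharp$, restricting $\phi$ first to $H^\sharp$ and then to $F^\sharp$ is the same as restricting $\phi$ directly, so this colength is exactly $\ord_{F^\sharp}\phi$.

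In parallel I would compute $\mt{am}(W,P)=(W^\sharp\cdot F^\sharp)_0$ in the smooth ambient $\CC^4$. Here $W^\sharp$ is the Cartier divisor $\{\phi=0\}$ and $F^\sharp$ is the same smooth curve, so this intersection number is $\dim_\CC\OOO_{F^\sharp,0}/(\phi|_{F^\sharp})=\ord_{F^\sharp}\phi$ as well. Comparing the two computations yields $\varsigma(W,E,P)=\mt{am}(W,P)$. Since the right-hand side makes no reference to $E$, this simultaneously shows that $\varsigma(W,E,P)$ is independent of the chosen divisor $E\in|-K_W|$.

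The only genuinely delicate point is the passage from an intersection computed inside the threefold $H^\sharp$ to one computed inside $\CC^4$: both must be read off from the \emph{same} restricted function $\phi|_{F^\sharp}$. This is precisely what the Claim ($F^\sharp\subset H^\sharp$) secures, and once it is in place the equality of the two vanishing orders is immediate. I would therefore expect no computational obstacle beyond carefully justifying that the local equation of $E^\sharp$ in $H^\sharp$ is the restriction of $\phi$ and that $F^\sharp\not\subset W^\sharp$.
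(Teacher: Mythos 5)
Your proof is correct and is essentially the paper's own argument, spelled out in more detail: the paper's one-line proof is precisely that both $\varsigma(W,E,P)$ and $\mt{am}(W,P)$ coincide with the order of vanishing of $\phi|_{F^\sharp}$, which is exactly the reduction you carry out.
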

\begin{proof}
Both sides of the equality coincide with
the order of vanishing of $\phi|_{F\sharp}$. 
\end{proof}

\begin{corollary}
\label{cor-lemma-singul-cA/2}
Let $(W,P)$ is a three-dimensional terminal singularity of index $m>1$
which is not a cyclic quotient 
and let $E\in |-K_{(W,P)}|$ be a member having a Du Val singularity of $A$-type at $P$.
Then $E$ is isomorphic to a general member $E_{\mt{gen}}\in |-K_{(W,P)}|$.
\end{corollary}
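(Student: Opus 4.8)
The plan is to read off the type of $(E,P)$ from the action of $\muu_m$ on the covering Du Val germ $(E^\sharp,P^\sharp)$, and then to show that this type is an invariant of $(W,P)$ alone, so that it cannot differ between a special $A$-type member and the general one. First I would invoke the preceding Lemma: since $(W,P)$ is not a cyclic quotient, $\varsigma(W,E,P)=\mt{am}(W,P)$ is independent of $E$; write $r:=\mt{am}(W,P)$. As $(E,P)$ is Du Val of $A$-type by hypothesis, the only row of the table \eqref{eq-table} producing an $A$-type quotient is the first one, so necessarily $(E^\sharp,P^\sharp)=A_{r-1}$, $(E,P)=A_{mr-1}$, and the induced $\muu_m$-action on $E^\sharp$ is the toroidal one $\frac1m(1,-1)$.

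The decisive step is to pin down the weight configuration of $(W^\sharp,P^\sharp)$. By the classification recalled in \eqref{lemma-singul-cA/2}, $\wt(x_1,x_2,x_3,x_4)$ is congruent $\bmod\ m$ either to $(1,-1,a,0)$, or, only when $m=4$, to $(1,-1,a,2)$. Eliminating $x_3$ by the semi-invariant equation $\psi$ of $H^\sharp$ (with $\wt\psi\equiv a$), I would present $(E^\sharp,P^\sharp)\subset(\CC^3_{x_1,x_2,x_4},0)$ together with the induced action. In the second configuration the generator of $\muu_4$ multiplies $x_4$ by $-1$, so the action along the $A$-type surface $E^\sharp$ is $\frac14(1,-1,2)$; by the table \eqref{eq-table} this yields a $D$-type quotient, contradicting the $A$-type hypothesis. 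Hence $(W,P)$ must be of the first, \emph{toroidal}, configuration $\frac1m(1,-1,a,0)$.

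The point is that this weight configuration depends only on $(W,P)$ and not on the chosen section: in the toroidal case $x_4$ is $\muu_m$-invariant, so for \emph{every} smooth semi-invariant $H^\sharp$ the action induced on $E^\sharp\subset\CC^3_{x_1,x_2,x_4}$ is again $\frac1m(1,-1,0)$ along the surface, giving an $A$-type quotient. In particular the general member $E_{\mt{gen}}$, which is Du Val by the general elephant theorem (see \cite{Reid-YPG1987}), also falls into the first row of \eqref{eq-table}; since $\varsigma(W,E_{\mt{gen}},P)=r$ by the Lemma, it is likewise of type $A_{mr-1}$. As Du Val singularities of a fixed type have no moduli, $E\cong E_{\mt{gen}}$.

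The main obstacle is the argument of the second paragraph, namely ruling out that a \emph{special} section in the $cAx/4$ situation could acquire an $A$-type quotient. This is what makes the statement nontrivial: semicontinuity alone would permit a special $A_{4r-1}$ together with a milder general $D_{r+2}$. The resolution is that the $\muu_4$-action moves $x_4$ by $-1$ on every section, so no section whatsoever can have an $A_k$ quotient; thus the $A$-type hypothesis excludes the $cAx/4$ configuration outright, not merely for the general member.
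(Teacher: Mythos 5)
Your opening paragraph is, in substance, the paper's entire proof: the Lemma gives $\varsigma(W,E,P)=\mt{am}(W,P)=:r$ independently of $E$, and the $A$-type hypothesis places $(E^\sharp,P^\sharp)\to(E,P)$ in the first row of \eqref{eq-table}, whence $(E,P)\cong A_{mr-1}$. The genuine gap is in your third paragraph --- the step you yourself call decisive. The claim that in the weight configuration $\wt(x_1,x_2,x_3,x_4)\equiv(1,-1,a,0)$ \emph{every} smooth semi-invariant section acquires an $A$-type quotient is false. By the classification recalled in \xref{lemma-singul-cA/2}, every non-cyclic-quotient terminal point other than $cAx/4$ --- in particular $cAx/2$, $cD/2$, $cD/3$, $cE/2$ --- has exactly this ``toroidal'' configuration (the fixed axis is the weight-$0$ axis), yet rows 3--6 of \eqref{eq-table} show that their elephants are of type $D$ or $E$. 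The ambient representation $\frac1m(1,-1,0)$ on $H^\sharp\simeq\CC^3$ does not determine the quotient type of an invariant Du Val surface; that depends on how $E^\sharp$ sits relative to the fixed axis. Concretely, for a $cAx/2$ point the general member has covering singularity of type $A_{2r-1}$, the ambient $\muu_2$-action is of type $\frac12(1,1,0)$ and free off the origin on the surface, and the quotient is $D_{r+2}$ (row 3), not of type $A$. So your dichotomy correctly excludes the $cAx/4$ configuration (the tangent representations $(1,-1,0)$ and $(1,-1,2)$ of $\muu_4$ are non-isomorphic, so your second paragraph is sound), but it does not establish the one thing still needed, namely that $E_{\mt{gen}}$ is of type $A$.

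The gap can be closed without the false principle. Since $E^\sharp_{\mt{gen}}$ is a generization of $E^\sharp\cong A_{r-1}$ in the family of hyperplane sections of $W^\sharp$ through $P^\sharp$, and an $A$-type rational double point generizes only to $A$-type points (or to smooth points, excluded here because $W^\sharp$ is singular at $P^\sharp$), we get $(E^\sharp_{\mt{gen}},P^\sharp)\cong A_k$ with $1\le k\le r-1$. Feeding this together with $\varsigma(E^\sharp_{\mt{gen}},P^\sharp,\muu_m)=r$ back into \eqref{eq-table}: row 2 is ruled out by your tangent-representation argument, and row 3 forces $m=2$, $r=2$, $k=1$, where it returns $D_3=A_3=A_{mr-1}$, the same answer as row 1. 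Hence $(E_{\mt{gen}},P)\cong A_{mr-1}\cong(E,P)$. (Equivalently: the existence of an $A$-type member forces $(W,P)$ to be of type $cA/m$, for which the quadratic part of every section's equation contains $x_1x_2$ and row 1 applies to all members.)
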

\begin{proof}
By the above lemma we have $\varsigma(E^\sharp,P^\sharp,\muu_m)=\varsigma(E^\sharp_{\mt{gen}},P^\sharp,\muu_m)
=\mt{am}(W,P)$. Then the statement follows by the first line in \eqref{eq-table}.
\end{proof}
\end{mparag}

\begin{propositionm}
\label{prop-div-contr}
Let $\varphi\colon (V,\Gamma) \to (W,o)$ be 
the analytic germ of a divisorial extremal contraction
of threefolds with terminal singularities 
\textup(in particular, $W$ is $\QQ$-Gorenstein\textup) such that 
the central fiber
$\Gamma:=\varphi^{-1}(o)_{\red}$ is one-dimensional and
irreducible.
\begin{enumerate}
\item
The point $(W,o)$ cannot be of type $cAx/4$.
\item
If $(W,o)$ is of type $cAx/2$, then $(V,\Gamma)$ has a unique non-Gorenstein point 
which is of type \type{II^\vee}.
\item
If $(W,o)$ is analytically isomorphic to 
\begin{equation}
\label{eq-singul-cA/2}
\{ x_1x_2+x_3^2+x_4^{2k}=0 \} / \muu_2(1,1,0,1),
\end{equation}
then $(V,\Gamma)$ has a unique non-Gorenstein point $P$
which is locally imprimitive of index $4$ and splitting degree $2$.
Moreover, $P\in (V,\Gamma)$ is either of type 
\type{II^\vee}
or \type{IA^\vee} and in the second case $(X,P)$ is a cyclic quotient singularity.
\end{enumerate}
\end{propositionm}

\begin{proof}
For the proof we assume that $(W,o)$ is of type $cAx/4$, $cAx/2$,
or as in \eqref{eq-singul-cA/2}.
We will use the classification \cite[Th. 2.2]{Kollar-Mori-1992}. 
Let $m$ be the index of $(W,o)$.
Then the the canonical class $K_W$ 
is an $m$-torsion element in $\Clsc(W,o)$.
Its pull-back $\varphi^*K_W$ is a well-defined Cartier divisor on 
$V\setminus \Gamma$ such that $m(\varphi^*K_W)\sim 0$.
Hence the group $\Clsc(V,\Gamma)$
contains an $m$-torsion element, say $\xi$. 
By the classification \cite[Th. 2.2]{Kollar-Mori-1992}
$\Clsc(V,\Gamma)$ can contain a torsion only
when $(V,\Gamma)$ is of type \type{k1A} (with a point
of type \type{IA^\vee}), \type{II^\vee},
or \type{k2A}.

Assume that $(V,\Gamma)$ is of type \type{k2A}.
Then by \cite[Th. 2.2]{Kollar-Mori-1992} a general 
member $D\in |-K_V|$ and its image
$\varphi(D)\in |-K_W|$ have only Du Val singularities. 
Moreover, $(\varphi(D),o)$ is a singularity of type $A_*$
and so $(W,o)$ is of type $cA/*$. Clearly,
the contraction $\varphi|_D\colon D\to \varphi(D)$
is crepant. By our assumptions 
$(W,o)$ is a singularity given by \eqref{eq-singul-cA/2}.
So, $\mt{am}(W,o)=2$.
By Corollary \ref{cor-lemma-singul-cA/2} the singularity $(\varphi(D),o)$ is 
of type $A_3$. Since $\varphi_D\colon D\to \varphi(D)$ is crepant
and $V$ has two singular points, the only possibility is that 
$D$ has two singularities of type $A_1$.
But in this case $V$ is of index two and 
then by \cite[Th. 4.7]{Kollar-Mori-1992} $V$ has 
a unique non-Gorenstein point, a contradiction. 

In the remaining cases \type{II^\vee} and \type{k1A},
$V$ has a unique non-Gorenstein point $P$. 
Then $(V,\Gamma)$ is locally imprimitive at $P$ and
the splitting degree equals $m$. In particular, the index of $P$
is $>m$ \cite[Cor. 1.16]{Mori-1988}. Thus if $(V,\Gamma)$ is of type \type{II^\vee}, 
then we are in the case (ii) or (iii).

Assume that $(V,\Gamma)$ is of type \type{k1A}.
Then by \cite[Th. 2.2]{Kollar-Mori-1992} a general 
member $D\in |-K_V|$ does not contain $\Gamma$,
has only Du Val singularity at $P:=\{D\cap \Gamma\}$,
and $\varphi|_D\colon D\to \varphi(D)$ is an isomorphism.
Hence $\varphi(D)\in |-K_W|$ has a Du Val singularity of type 
$A$ at $o$. In this case,
$(W,o)$ cannot be of type $cAx/*$. 
Thus $(W,o)$ is given by \eqref{eq-singul-cA/2}.
By Corollary \ref{cor-lemma-singul-cA/2} $D\simeq \varphi(D)$
is of type $A_3$. Since the index of $(V,P)$ is $>2$,
$(V,P)$ must be a cyclic quotient singularity 
$\frac14(1,1,-1)$. So we are in the case (iii). This proves the proposition.
\end{proof}

\begin{propositionm}
\label{lemma-flip-1}
Let $\chi\colon (V,\Gamma) \dashrightarrow (V^+,\Gamma^+)$
be a flip of threefolds with terminal singularities
with irreducible flipping curve $\Gamma$. 
Then $(V^+,\Gamma^+)$ 
contains none of the following configurations of singularities:
\begin{enumerate}
\item
two cyclic quotient singularities $P_1^+$ and $P_2^+$ of indices $m_1$ and $m_2$
with $\gcd(m_1,m_2)>1$ such that $(V^+,\Gamma^+)$ is locally 
primitive at $P_1^+$ and $P_2^+$;
\item
an imprimitive point $P^+$
of splitting degree $s>1$.
\end{enumerate}
\end{propositionm}

\begin{proof}
By \cite[Cor. 13.4]{Kollar-Mori-1992} $\Gamma^+$ is irreducible.
Assume that one of the cases (i)-(ii) holds.
As in \cite[Cor. 1.12]{Mori-1988} there is a $d$-torsion 
element $\xi^+\in \Clsc V^+$ for some $d>1$. Its proper transform $\xi$ on $V$
is a $d$-torsion element in $\Clsc V$.
In \cite{Kollar-Mori-1992} flips are classified 
into 6 types \type{k1A}, \type{k2A}, \type{cD/3}, \type{IIA}, \type{IC}, \type{kAD}
according to a general member 
of the anticanonical linear system $|-K_{V}|$ 
\cite[Th. 2.2]{Kollar-Mori-1992}. 
The group $\Clsc V$ can contain a torsion only in cases \type{k1A} and \type{k2A}
(in all other cases the flipping variety is locally primitive and 
indices of non-Gorenstein points are coprime, 
cf. \cite[Cor. 1.12]{Mori-1988}). 
The torsion elements $\xi$ and $\xi^+$ induce 
the following cyclic $\muu_d$-coverings:
\begin{equation}
\label{eq-diag-flips}
\xymatrix{
(V',\Gamma')\ar@{-->}[r]^{\chi'}\ar[d]^{\pi}&
(V^{+\prime},\Gamma^{+\prime})\ar[d]^{\pi^+}
\\
(V,\Gamma)\ar@{-->}[r]^{\chi}&(V^{+},\Gamma^+)
}
\end{equation}

Consider the flipping diagram
\[
\xymatrix{ (V,\Gamma) \ar@{-->}[rr]^{\chi}\ar[dr]_{\varphi}
&&(V^+,\Gamma^+)\ar[dl]^{\varphi^+}
\\
&(W,o)&
}
\]
By \cite[Th. 7.3, 9.10]{Mori-1988} and
\cite[Th. 2.2]{Kollar-Mori-1992}, a general member $D\in |-K_{V}|$
has only Du Val singularities. Since the restriction
$\varphi_D\colon D\to \varphi(D)$ is crepant, the same holds 
for $\varphi(D)\in |-K_W|$. Further, if we put $D^+=\chi(D)$,
then $D^+\in |-K_{V^+}|$ and $D^+$ also has 
only Du Val singularities. 
Since $K_{V^+}\cdot \Gamma^+>0$, $D^+\supset \Gamma^+$.

\begin{parag}{}
First we consider the case where our flip is of type \type{k1A}.
Then $V$ has a unique non-Gorenstein point $P$
and $P$ is of type $cA/*$.
In this case $D\cap \Gamma =\{P\}$ and $(\varphi(D),o)\simeq (D,P)$ 
is of type $A_*$. 
Since $\Clsc V$ has a torsion, $(V,\Gamma)$ is locally imprimitive at $P$. 
\end{parag}

\begin{parag}{}
Assume that we are in the case (i).
We claim that $V^+$ has at least one Gorenstein singular point.
Indeed, 
since the germ 
$(V,\Gamma)$ has only one non-Gorenstein point, it is locally imprimitive
and in the diagram \eqref{eq-diag-flips} 
$\pi$ is the splitting cover \cite[Cor. 1.12]{Mori-1988}. 
Here $\Gamma'$ has exactly $d$ components
and $V ^{+\prime}$ is the relative canonical model of $V'$. 
Since $(V^{+},\Gamma^+)$ is locally primitive at $P_1^+$ and $P_2^+$,
the curve $\Gamma^{+\prime}$ is irreducible.
Now the map $\chi'$ can be decomposed as follows
\[
\chi'\colon V'=V_0' \dashrightarrow V_1' \dashrightarrow 
\cdots \dashrightarrow V_n' \to V ^{+\prime},
\]
where every $V_i' \dashrightarrow V_{i+1}'$ is a flip along an irreducible curve
and $V_n' \to V ^{+\prime}$ is a crepant small contraction
(cf. \cite[Proof of 13.5]{Kollar-Mori-1992}).
Every step $V_i' \dashrightarrow V_{i+1}'$ preserves the number 
of components of the central fiber. Hence the crepant 
contraction $V_n' \to V ^{+\prime}$ is nontrivial and gives 
us a Gorenstein non-$\QQ$-factorial point $Q\in \Gamma^+\subset V^{+}$. 
This proves our claim. Thus the divisor $D^+$ has at least three 
singular points: $P_1^+$, $P_2^+$, and $Q$.
But then $\varphi^+_D\colon D^+\to \varphi(D)$ 
contracts $\Gamma^+$ to a Du Val singularity of type 
$D_*$ or $E_*$, a contradiction. 
\end{parag}

\begin{parag}{}
\label{par-plt-1}
Now we assume that we are in the case (ii). 
We claim that the log divisor $K_{D^+}+\Gamma^+$ is not plt at $P^+$.
Indeed, in the diagram \eqref{eq-diag-flips} $\pi^+$ is the splitting cover 
(see \cite[Cor. 1.12.1]{Mori-1988}).
In particular, $\pi^+$ is \'etale outside $P^+$,
$\pi^{+-1}(P^+)$ is one point, and $\Gamma^{+\prime}$ has 
$s>1$ irreducible components, all of them pass through $\pi^{+-1}(P^+)$. 
Let $D^{+\prime}:=\pi^{+-1}(D^+)$. 
Since $\Gamma^{+\prime}$ is singular at $\pi^{+-1}(P^+)$,
the log divisor $K_{D^{+\prime}}+\Gamma^{+\prime}$ is not plt at this point.
This proves our claim because the restriction $\pi^+_D\colon D^{+\prime}
\to D^+$ is \'etale in codimension one (see, e.g., \cite[Cor. 20.4]{Utah}).
Now since the contraction $D^+\to \varphi (D)$ is crepant,
$D^+$ is dominated by the minimal resolution $D^{\min}$ of $\varphi(D)$:
$D^{\min}\to D^+\to \varphi (D)$.
Since $K_{D^+}+\Gamma^+$ is not plt, the exceptional divisor of 
$D^{\min}\to \varphi(D)$ is not a chain of smooth rational curves.
Hence $(\varphi(D),o)$ is not a singularity of type $A_*$, a contradiction.
\end{parag}

\begin{parag}{}
Finally, we consider the case where our flip is of type \type{k2A}
These flips are described in \cite{Mori-2002}.
We will use notation of \cite{Mori-2002}.
By \cite[Th. 4.7]{Mori-2002} 
$(V^+,\Gamma^+)$ is locally primitive.
Hence we have the case (i). Moreover,
$V^+$ has exactly two singular points and they 
are analytically isomorphic to germs of the following $cA/m_i$ 
singularities:
\[
\{\xi_i\eta_i=G_{k-i}(\zeta_i^{m_i}, u^{e(k+2-i)})\}/\muu_{m_i}
\subset \CC_{\xi_i,\eta_i,\zeta_i,u}^4
/\muu_{m_i}(1,-1,a_i,0),
\]
where $k$, $a_i$ are some positive numbers and $e(j)$ is some function.
Hence these 
points coinside with $P_1^+$ and $P_2^+$.
Since $P_i^+\in \Gamma^+\subset V^+$ are cyclic quotient singularities,
we have $e(k)=e(k+1)=1$ ($u$ needs to be eliminated).
If we put $\delta:=a_1m_2+a_2m_1-m_1m_2$, then $\delta\ge d$ and
by definition \cite[Def. 3.2]{Mori-2002} we have $e(3)=0$, 
$e(4)=\delta\alpha_1\ge d>1$,
$e(5)=(\delta^2\rho_2-1)\alpha_1+\delta\alpha_2\ge d>1$
(see \cite[Rem. 3.6]{Mori-2002}).
Thus, $k\ge 6$.
On the other hand, by \cite[Lemma 3.5, Cor. 3.7]{Mori-2002}
we have $k\le 5$, a contradiction.
\end{parag}
\end{proof}

\begin{propositionm}
\label{prop-crepant-contr}
Let $\varphi\colon (V,\Gamma)\to (W,o)$ be the germ of a birational crepant
contraction of threefolds with terminal singularities, where
$\Gamma$ is irreducible.
\begin{enumerate}
\item 
$(V,\Gamma)$ contains at most two non-Gorenstein points. 
\item
If $(V,\Gamma)$ is imprimitive at some point $P$, then
$ (W,o)$ cannot be a singularity of type $cA/*$.
\end{enumerate}
\end{propositionm}

\begin{proof}
For the proof we assume that $V$ is not Gorenstein.
Since $\varphi$ is crepant, the point $ (W,o)$
is not Gorenstein. Let $m$ be its index.
Let $D\in |-K_{(W,o)}|$ be a general member and let
$S:=\varphi^{-1}(D)$. Then $S\in |-K_{ (V,\Gamma)}|$
and both $S$ and $D$ have only Du Val singularities.
Moreover, the restriction map $\varphi_S\colon S \to D$ is crepant.
Hence $S$ is dominated by the minimal resolution 
$D^{\min}$ of $D$ and obtained from $D^{\min}$ by contracting 
all but one exceptional curves. 

First assume that $(V,\Gamma)$ has at least three non-Gorenstein points, 
say $P$, $Q$, and $R$. 
By the classification of Du Val singularities $(D,o)$ is a singularity of type $D_*$ or $E_*$
and $S$ is obtained from $D$ by blowing up the 
exceptional curve corresponding to the 
central vertex in the Dynkin diagram.
In this case exceptional curves on $D^{\min}$ over $(S,P)$, $(S,Q)$ and $(S,R)$
form strings and the proper transform of $\Gamma$ 
is adjacent to the ends of them.
This means that the log divisor $K_{S}+ \Gamma$ is plt. 
The latter implies that the germ $(V, \Gamma)$
is locally primitive (cf. \ref{par-plt-1}). 
Now consider the
index-one cover $\pi \colon (W^\sharp, o^\sharp)\to (W,o)$.
It induces the following diagram
\begin{equation}
\label{eq-diag-1}
\xymatrix{
 (V^{\sharp}, \Gamma^{\sharp})\ar[r]^{\upsilon}\ar[d]^{\varphi^\sharp}
& (V, \Gamma)\ar[d]^{\varphi}
\\
 (W^\sharp, o^\sharp)\ar[r]^\pi& (W,o)
}
\end{equation}
Since $(V, \Gamma)$
is locally primitive, $\Gamma^{\sharp}=\pi^{\sharp}(o^\sharp)$
is irreducible. The group $\muu_{m}$ naturally acts on $\Gamma^{\sharp}
\simeq \PP^1$ and has exactly two fixed points.
Thus we may assume that $\upsilon^{-1}(R)$
contains no fixed points.
But then $\upsilon^{-1}(R)$ consists of $m>1$
non-Gorenstein points of the same index.
By \cite[Cor. 1.12]{Mori-1988} there is a torsion element 
in $\Clsc (V^{\sharp}, \Gamma^{\sharp})\simeq \Clsc (W^\sharp, o^\sharp)$.
This contradicts the fact that $W^\sharp\setminus \{ o^\sharp\}$ is simply connected.
Thus (i) is proved.

Now assume that $(V,\Gamma)$ contains an imprimitive point $P$.
By the proof of (i) $S$ has at most two singular points
and the log divisor $K_S+\Gamma$ is not plt at $P$.
On the other hand, assume that $(D,o)$ is a point of type $A_*$.
Then the exceptional curves of 
the minimal resolution $D^{\min}\to S$ and $\Gamma$ form a 
chain. Hence $K_S+\Gamma$ is not plt, a contradiction.
\end{proof}

\begin{mtparag}{Proposition (cf. \cite[1.14]{Mori-1988}).}
\label{claim-non-Gorenstein}
Let $f\colon (X,C)\to (Z,o)$ be the germ of a contraction from a threefold with only
terminal singularities to a surface such that 
\begin{enumerate}
\item
$-K_X$ is nef and big,
\item
$C:=f^{-1}(o)_{\red}$ is a curve having 
at least three components,
\item
each $K_X$-trivial component $C_j\subset C$ contains a non-Gorenstein point.
\end{enumerate}
Then $X$ has index $>1$
at all singular points of $C$.
\end{mtparag}
\begin{proof}
By the Kawamata-Viehweg vanishing theorem we have 
$R^1f_*\OOO_X=0$. Hence $C$ is a union of $\PP^1$'s 
whose configuration is a tree.
Let $P\in C$ be a singular point and let $C_i\subset C$
be a component passing through $P$.
We have $\gr_{C_i}^0\omega\simeq \OOO(-1)$.
Indeed, take a positive integer $m$ such that $mK_{X}$
is Cartier. Then there is a natural embedding 
$(\gr_{C_i}^0\omega)^{\otimes m} \hookrightarrow 
\OOO_{C_i}(mK_{X})$.
Since $K_{X}\cdot C_i\le 0$ we have 
$\deg \gr_{C_i}^0\omega\le 0$. 
Moreover, if $K_{X}\cdot C_i< 0$, then 
$\deg \gr_{C_i}^0\omega< 0$. Assume that $K_{X}\cdot C_i= 0$
Since $C_i$ contains a non-Gorenstein point, the above embedding is 
not an isomorphism and so again $\deg \gr_{C_i}^0\omega< 0$. 
On the other hand, $C_i$ is contractible over $Z$. Hence,
by the Grauert-Riemenshneider vanishing theorem 
we have $H^1(\gr_{C_i}^0\omega)=0$.
This shows $\gr_{C_i}^0\omega\simeq \OOO(-1)$.

Now let $C_j$ be another component of $C$ passing through $P$.
As above, $\gr_{C_j}^0\omega\simeq \OOO(-1)$.
Consider the following exact sequence
\[
0 \longrightarrow \gr_{C_i\cup C_j}^0\omega 
\longrightarrow
\gr_{C_i}^0\omega \oplus \gr_{C_j}^0\omega
\longrightarrow \FFF \longrightarrow 0,
\]
where $\Supp \FFF =P$.
Since $C_i \cup C_j\neq C$, $C_i\cup C_j$ is contractible over $Z$
and again by the Grauert-Riemenshneider vanishing 
$H^1(\gr_{C_i\cup C_j}^0\omega)=0$.
This implies $\gr_{C_i\cup C_j}^0\omega 
\simeq
\gr_{C_i}^0\omega \oplus \gr_{C_j}^0\omega$.
So $\gr_{C_i\cup C_j}^0\omega $ is not locally free 
at $P$ and this point cannot be Gorenstein.
\end{proof}

\section{The proof of the main theorem}
\label{sect2}
In this section we prove Theorem \ref{th-m}.
\begin{mparag}{Notation.}
\label{m-not}
Let $f\colon (X,C)\to (Z,o)$ be a $\QQ$-conic bundle
germ with reducible central fiber $C$. Then $\rho(X/Z)>1$. 
Recall that according to \cite[Th. 1.2.7]{Mori-Prokhorov-2006}
$(Z,o)$ is either smooth or Du Val of type $A$.
We assume that $(Z,o)$ is singular of type $A_{n-1}$, $n\ge 2$.

\begin{lemma}
\label{lemma-start}
Notation as above.
\begin{enumerate}
\item
If $(X,C)$ has a point $P$  such that either 
\begin{enumerate}
\item
$P$ is of type $cAx/4$, or
\item
for each component $C_i\subset C$ passing through $P$
the germ $(X,C_i)$ is locally
imprimitive at $P$.
\end{enumerate}
Then $P$ is the only non-Gorenstein point on $X$.
\item
Conversely, if $P$ is a unique non-Gorenstein point on $X$,
then all the components $C_i\subset C$ pass through $P$
and the germ $(X,C_i)$ is locally
imprimitive at $P$. 
If furthermore $(X,P)$ is of index $4$, then 
$(X,C)$ is a quotient of an index two $\QQ$-conic bundle germ $(X',C')$
over a smooth base by $\muu_2$, where the action is free in codimension one,
$C'$ has four irreducible components and $\muu_2$ does not fix 
any of them.
\end{enumerate}
\end{lemma}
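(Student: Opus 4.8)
For part (i) I would argue by contradiction. First I record that $R^1f_*\OOO_X=0$ by Kawamata--Viehweg vanishing, so $C$ is a connected tree of smooth rational curves, and that $K_X\cdot C_i<0$ for every component, since $-K_X$ is $f$-ample; in particular $(X,C)$ has no $K_X$-trivial fiber component and hypothesis (iii) of Proposition~\ref{claim-non-Gorenstein} is vacuous. Suppose now that $X$ had a second non-Gorenstein point $Q\neq P$. Taking a small $\QQ$-factorialization $g\colon(X^{\q},C^{\q})\to(X,C)$ and running the relative MMP over $Z$, as outlined in the introduction, I reach a $\QQ$-conic bundle $(\bar X,\bar C)\to(Z,o)$ with irreducible central fiber, necessarily one of the germs of \ref{th-MP-06}. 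Every elementary step has irreducible centre, so Propositions~\ref{prop-div-contr}, \ref{lemma-flip-1} and \ref{prop-crepant-contr} apply at each stage. The hypothesis on $P$ --- type $cAx/4$, or imprimitivity of every local branch through $P$ --- produces torsion in $\Clsc$ and a point of splitting degree $>1$, and I would trace this datum through the MMP: a second non-Gorenstein point would force somewhere a flip creating an imprimitive point (excluded by \ref{lemma-flip-1}(ii)), a pair of non-coprime primitive cyclic quotients (excluded by \ref{lemma-flip-1}(i)), or a divisorial contraction onto a $cAx/4$ germ (excluded by \ref{prop-div-contr}(i)); failing all these, the endpoint $(\bar X,\bar C)$ would carry more non-Gorenstein structure than any row of \ref{th-MP-06} permits. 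This contradiction proves $P$ is the unique non-Gorenstein point.

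For the converse in (ii), assume $P$ is the unique non-Gorenstein point. If $C$ has at least three components, Proposition~\ref{claim-non-Gorenstein} forces every singular point of the curve $C$ to be non-Gorenstein, hence to equal $P$; since $C$ is a connected tree, all components then pass through $P$. The remaining case of exactly two components I would settle directly, by showing that their node lies over the singular point $o$ and therefore cannot be Gorenstein, so it again equals $P$. That each $(X,C_i)$ is imprimitive at $P$ then follows from the conic-bundle geometry: the fibers of $f$ are simply connected and the Gorenstein terminal points of $X$ have simply connected links, so the whole local fundamental group, governed by $\pi_1\bigl((Z,o)\setminus o\bigr)=\muu_n$ with $n\geq 2$, must be concentrated at $P$; this monodromy splits each $C_i$ in the index-one cover, which is precisely imprimitivity.

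The substantial case is index four. The terminal classification then forces splitting degree $2$, since the covering index $4/s$ must be an integer strictly smaller than $4$; thus the splitting cover is a $\muu_2$-cover $\pi\colon(X',P')\to(X,P)$, \'etale in codimension one, with $P'$ of index $2$ and with each $C_i$ splitting into two branches. I would globalize $\pi$ over the whole germ, using that $P$ is the only non-Gorenstein point to extend the local $\muu_2$-torsion to a global class, matched with a $\muu_2$-cover $Z'\to Z$ of the base. Because an index-two terminal point is automatically primitive, applying (ii) to the resulting germ $(X',C')\to(Z',o')$ shows that $Z'$ cannot be singular; hence $Z'=\CC^2$ is smooth, $n=2$, the $\muu_2$-action on $Z'$ is free in codimension one, and $X=X'/\muu_2$. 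Then $f'\colon(X',C')\to(Z',o')$ is an index-two $\QQ$-conic bundle over a smooth base with unique non-Gorenstein point $P'$; that $C'=\pi^{-1}(C)$ has four components and that $\muu_2$ fixes none of them follows from the structure of such bundles over a smooth base in \cite{Mori-Prokhorov-2006}, together with the freeness of $\pi$ over the generic point of each branch.

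The \emph{main obstacle} is this last globalization: proving that the purely local $\muu_2$-torsion at the index-four point $P$ is the restriction of a global torsion class compatible with the base cover, equivalently that $n=2$. This is exactly the step that ties the index-four local geometry to the $A_1$ base and produces the smooth index-two model; once it is secured, the assertions about the four components and the fixed-point-free $\muu_2$-action reduce to routine bookkeeping.
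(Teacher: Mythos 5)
Your proof of (i) is not yet a proof: you propose to pass to a $\QQ$-factorialization, run the relative MMP, and ``trace the torsion datum through'', asserting that a second non-Gorenstein point would have to trigger one of the configurations excluded by Propositions \ref{prop-div-contr}--\ref{prop-crepant-contr}, but you never exhibit the case analysis that makes this so. You are also missing the key ingredient of the actual argument: for each component $C_i\ni P$ the germ $(X,C_i)$ is an \emph{extremal neighborhood} ($C_i$ is contractible over $Z$), and the classification \cite[Th.~2.2]{Kollar-Mori-1992} says that such a neighborhood containing a $cAx/4$ point, or which is locally imprimitive, has \emph{no other} non-Gorenstein point; combined with the fact that every node of $C$ is non-Gorenstein (\cite[Prop.~4.2]{Kollar-1999-R}) and the connectedness of $C$, this immediately forces every component through $P$ and kills any other non-Gorenstein point. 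Without some substitute for this local classification input your MMP sketch cannot be completed (and it is far heavier than needed).

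In (ii) there are two further gaps. First, in the index-four case you claim the splitting degree is $2$ ``since the covering index $4/s$ must be an integer strictly smaller than $4$''; this only gives $s>1$ and does not exclude $s=4$, i.e.\ a Gorenstein cover. The paper rules that case out by \cite[Th.~2.4]{Prokhorov-1997_e}, which would force $C$ to be irreducible. Second, you build the cover locally at $P$ and then explicitly concede that the ``globalization'' of the local $\muu_2$-torsion to a class compatible with a cover of the base is your main unproved obstacle. The intended argument runs in the opposite direction and the difficulty evaporates: since $(Z,o)$ is of type $A_{n-1}$, one takes the smooth $\muu_n$-cover $Z'\to Z$ and performs the base change of \cite[2.4]{Mori-Prokhorov-2006}, obtaining a global $\muu_n$-cover $X'\to X$, \'etale outside the non-Gorenstein locus $=\{P\}$, for free. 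Imprimitivity of each $(X,C_i)$, the divisibility $n\mid 4$, and the count of four components of $C'$ (no component of $C'$ is $\muu_2$-fixed, since a fixed $\PP^1$ would carry two fixed, hence non-Gorenstein, points; each $C_i$ therefore splits; and $C'$ has at most four components by \cite[Th.~12.1]{Mori-Prokhorov-2006}) all follow from this one construction.
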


\begin{proof}
Let $P\in X$ be a point as in (i). 
For each component $C_i\subset C$ passing through $P$
the germ $(X,C_i)$ is an extremal neighborhood and by 
\cite[Th. 2.2]{Kollar-Mori-1992} $(X,C_i)$ has no non-Gorenstein point
other than $P$. Since each singular point of $C$ is not Gorenstein 
\cite[Prop. 4.2]{Kollar-1999-R}, 
\cite[4.4.2]{Mori-Prokhorov-2006} and 
$C$ is connected, $P$ is the only non-Gorenstein point on the whole $X$.

Now assume that $P$ is the only non-Gorenstein point.
Consider the base change \cite[2.4]{Mori-Prokhorov-2006}:
$(X',C')\to (X,C)$. Here $(X',C')$ is a conic bundle germ over a smooth base
and $X'\to X$ is an \'etale outside $P$ $\muu_n$-cover.
Thus $(X,C)=(X',C')/\muu_n$.
If $\muu_n$ fixes a component $C_i'\subset C$, then
there are two $\muu_n$-fixed points on $C_i$ and they 
give us two non-Gorenstein points on $X$,
a contradiction.  So the first assertion of (ii) is proved.

Finally assume that $(X,P)$ is of index $4$.
Since the index of $(X,P)$ is divisible by $n$, $n=4$ or $2$..
If $n=4$, then $X'$ is Gorenstein. 
In this case, by \cite[Th. 2.4]{Prokhorov-1997_e} $C$ is irreducible,
a contradiction. Thus $n=2$ and $(X',C')$ is of index $2$.
By the above, 
$\muu_2$ does not fix any component of $C'$.
On the other hand, $C'$ has at most four components
\cite[Th. 12.1]{Mori-Prokhorov-2006}.
Hence $C'$ has exactly four components.
This proves the lemma.
\end{proof}

\begin{parag}{}
Let $q\colon X^{\q}\to X$ be a $\QQ$-factorialization.
(It is possible that $q$ is the identity map.)
Run the MMP over $Z$:
$X^{\q}=X_0 \dashrightarrow X_{N+1}=\bar X$.
Since $X/Z$ is a rational curve fibration, 
$X_{N+1}$ is not a minimal model over $Z$. Therefore, 
at the end we get an extremal contraction 
$\bar f\colon \bar X\to \bar Z$ of Fano type over $Z$.
Since the composition $f^{\q}\colon X^{\q}\to Z$ has only one-dimensional
fibers, 
$Z=\bar Z$ and 
$X^{\q}\dashrightarrow \bar X$ is a sequence of flips and 
extremal 
divisorial contractions that contract a divisor to a curve
which is not contained in the fiber over $o\in Z$.
Thus we have the following diagram:
\[
\xymatrix{
(X^{\q},C^{\q})\ar@/^0.6pc/[ddrr]^{f^{\q}}
\ar@{-->}[r]^{g_{0}}\ar[d]_{q}&(X_1,C_1)\ar@{-->}[r]&
\cdots\ar@{-->}[r]&(X_{N},C_{N})\ar[r]^{g_{N}}
&(\bar X,\bar C)\ar@/^1.4pc/[ddll]_{\bar f} 
\\
(X,C)\ar@/_0.6pc/[drr]^{f}&&&&&
\\
&&(Z,o)&&
}
\]
Here each $X_k$ has a morphism $f_k\colon X_k\to Z$ with 
connected one-dimensional fibers
and $C_k:=f_k^{-1}(o)$
is the central fiber (with reduced structure). 
Since $\rho(\bar X/Z)=1$, 
$\bar f\colon \bar X\to \bar Z$ is a $\QQ$-conic bundle with
irreducible central fiber $\bar C$. 
Since the base $(Z,o)$ is singular, $\bar X$ is not Gorenstein.
So $\bar f$ is classified in \cite{Mori-Prokhorov-2006},
see also \ref{th-MP-06}.
\end{parag}

\begin{parag}{}
Note that each component of the central fiber $C_k$ 
is contractible and the resulting variety is again 
projective over $Z$
(because it has one-dimensional fibers over $Z$). 
Hence each component of $C_k$ 
generates an extremal ray (not necessarily $K$-negative).
This implies that all our flipping curves are irreducible 
and all the divisorial contractions have 
irreducible fibers. 
Note also that all the varieties $X_k$ are analytically 
$\QQ$-factorial at each point on $C_k$ (again because 
$X_k\to Z$ has one-dimensional fibers, cf. \cite[Proof of 1.7]{Mori-1988}).
\end{parag}
\end{mparag}

The following is the key argument in the proof.

\begin{propositionm}
\label{prop-m}
In the above notation one of the following holds.
\begin{tparag}{}
\label{prop-m-2}
There is a component $C_0^{\q}\subset C^{\q}$ containing two cyclic 
quotient singularities $P^{\q}$ and $Q^{\q}$ of index $n$.
No other components of $C^{\q}$ pass through $P^{\q}$ and $Q^{\q}$.
\end{tparag}

\begin{tparag}{}
\label{prop-m-1-1}
There is a point $P^{\q}\in (X^{\q},C^{\q})$ of index $m>1$ 
which is contained in only one 
component $C_0^{\q}\subset C^{\q}$ and 
such that $(X^{\q},C^{\q}_0)$ is locally imprimitive at $P^{\q}$.
The following are the possibilities for $(n,m)$\textup:  $(4,8)$, $(2,4)$, 
and $(2,2)$.
\end{tparag}

\begin{tparag}{}
\label{prop-m-1-2}
There is a point $P^{\q}\in (X^{\q},C^{\q})$ 
which is contained
in exactly two 
components $C_0^{\q}, C_1^{\q}\subset C^{\q}$ and 
such that both germs $(X^{\q},C^{\q}_i)$ are locally imprimitive at $P^{\q}$.
The point $(X^{\q},P^{\q})$ is of type 
$cAx/4$ or $\frac14(1,1,-1)$.  Here $n=2$.
\end{tparag}
Moreover, there is an $n$-torsion element $\xi^{\q}\in \Clsc(X^{\q},C^{\q})$
which is not Cartier at $P^{\q}$ 
\textup(and at $Q^{\q}$ is the case \xref{prop-m-2}\textup).
\end{propositionm}

\begin{proof}
Since $(Z,o)$ is of type $A_{n-1}$, there is an $n$-torsion element 
$\eta\in \Cl(Z,o)$. Put $\bar \xi := \bar f^* \eta$, $\xi_l := f_l^* \eta$, and 
$\xi^{\q} := f^{\q *} \eta$.

Assume that $(\bar X,\bar C)$ is either toroidal of of type 
\type{IA}+\type{IA}.
Let $\bar P,\, \bar Q$ be the singular points of $\bar X$.
Then $\bar \xi$ is not Cartier at $\bar P$ and $\bar Q$.
We claim that the map $\psi \colon \bar X\dashrightarrow X^{\q}$
is an isomorphism near $\bar P$ and $\bar Q$.
Indeed, by induction, 
since $\bar P,\, \bar Q$ are cyclic quotient singularities
of index $n$,
there is no divisorial contractions over these points by \cite{Kawamata-1996}
and by Proposition \ref{lemma-flip-1}
on each step the proper transform of $\bar C$ 
cannot be a flipped curve. So if we put $P^{\q}:=\psi(\bar P)$,
$Q^{\q}:=\psi(\bar Q)$, and $C^q_0:=\psi(\bar C)$, 
we get the case \ref{prop-m-2}.

Now assume that $(\bar X,\bar C)$ is of type 
\type{IE^\vee}, \type{IA^\vee}, or \type{II^\vee}.
Let $\bar P$ be a (unique) non-Gorenstein point. 
Then $(\bar X,\bar P)$ is either a cyclic quotient singularity or
of type $cAx/4$ and again $\bar \xi$ is not Cartier at $\bar P$.
Moreover, $(\bar X,\bar C)$ is locally imprimitive at $\bar P$.
As above, there is no divisorial contractions over $\bar P$
 by \cite{Kawamata-1996} and Proposition \ref{prop-div-contr}
and the proper transform of $\bar C$ 
cannot be a flipped curve 
by Proposition \ref{lemma-flip-1}.
Put $P^{\q}:=\psi(\bar P)$ and $C^q_0:=\psi(\bar C)$.
We get the case \ref{prop-m-1-1}.

Finally consider the case where $(\bar X,\bar C)$ is of type \type{ID^\vee}.
Then $n=2$, i.e., $(Z,o)$ is of type $A_1$.
Let $\bar P$ be a (unique) non-Gorenstein point. 
Then $(\bar X,\bar C)$ is locally imprimitive at $\bar P$ and
$(\bar X,\bar P)$ is 
of type $cA/2$ or $cAx/2$. 
Moreover, in the first case, $(\bar X,\bar P)$ is analytically 
isomorphic to a singularity given by \eqref{eq-singul-cA/2}.
If there is no divisorial contractions over $\bar P$, 
we can argue as above and get the case \ref{prop-m-1-1}.
Otherwise 
on some step, the map 
$\psi_{k+1}\colon \bar X \dashrightarrow X_{k+1}$
is an isomorphism near $\bar P$ and 
there is a divisorial contraction
$g_k\colon X_{k}\to X_{k+1}$ which blows up a curve passsing through 
$P_{k+1}:=\psi_{k+1}(\bar P)$.
Let $C_{k,0}:=g^{-1}_k(P_{k+1})$ and let $C_{k,1}$ be the proper transform of
$\bar C$ on $X_k$. By Proposition \ref{prop-div-contr}
$X_k$ has exactly one non-Gorenstein point $P_k$ on $C_{k,0}$. Moreover, 
$P_k$ is either a cyclic quotient singularity $\frac14(1,1,-1)$ or 
of type $cAx/4$ and $(X_k,C_{k,0})$ is locally imprimitive at 
$P_k$ of splitting degree $2$. 
Note that $\xi_k=g_k^*\xi_{k+1}$ is non-Cartier at all points of $C_{k,0}$.
Since $P_k$ is the only non-Gorenstein point on $C_{k,0}$,
$\xi_k$ is not Cartier at $P_k$.
Now if $C_{k,1}$ does not pass through $P_k$, then as above 
we get the case \ref{prop-m-1-1}. Assume that $C_{k,0}\cap C_{k,1}=\{P_k\}$.

We claim that $(X_k,C_{k,1})$ is locally imprimitive at 
$P_k$. Indeed, $\xi_k$ defines the double cover $\pi_k\colon (X'_k, C'_k)\to (X_k, C_k)$
which is \'etale outside $\Sing X_k$.
Since $\xi_k$ is not Cartier at $P_k$, $\pi_k$ does not split over $P_k$.
Hence, $C_{k,1}':=\pi_k^{-1}(C_{k,1})$ is connected.
On the other hand, 
since $(\bar X, \bar C)$ is locally imprimitive at $\bar P$, the curve $C_{k,1}'$
is reducible.
This means that $C_{k,1}$ is locally imprimitive at $P_k$.
Finally as above the map $X_k\dashrightarrow X^{\q}$
is an isomorphism near $P_k$. We get case \ref{prop-m-1-2}.
\end{proof}

\begin{propositionm}
\label{prop-last}
Notation as in \xref{m-not}.
Then $(X,C)$ contains only one non-Gorenstein point $P$. 
This point is either a cyclic quotient $\frac14(1,1,-1)$
or of type $cAx/4$. Moreover, 
for each component $C_i\subset C$ the germ $(X,C_i)$
is imprimitive at $P$ and 
$(Z,o)$ is of type $A_1$.
\end{propositionm}

\begin{proof}
By Proposition \ref{prop-m} there is a component $C^{\q}_0\subsetneq C^{\q}$
as in \ref{prop-m-2}, \ref{prop-m-1-1}, or \ref{prop-m-1-2}.
First assume that 
$C^{\q}_0$ is not contracted by $q\colon X^{\q}\to X$.
Put $C_0:=q(C^{\q}_0)$. Then $(X,C_0)$ is an extremal neighborhood.
In the case \ref{prop-m-2} it 
has two cyclic quotient singularities at $q(P^{\q})$ and $q(Q^{\q})$
and no other components of $C$ pass through $q(P^{\q})$ and $q(Q^{\q})$.
On the other hand, $C\neq C_0$ and 
intersection points $C_0\cap (C- C_0)$
are non-Gorenstein \cite[Prop. 4.2]{Kollar-1999-R}, 
\cite[4.4.2]{Mori-Prokhorov-2006}. 
Thus the extremal neighborhood $(X,C_0)$ has at least three non-Gorenstein 
points. This contradicts 
\cite[Th. 6.2]{Mori-1988}.
Similarly, in the case \ref{prop-m-1-1},
$(X,C_0)$ is locally imprimitive at $q(P^{\q})$
and no other components of $C$ pass through $q(P^{\q})$.
We get a contradiction by Lemma \ref{lemma-start}.
Consider the case \ref{prop-m-1-2}.
If $C_1^{\q}$ is not contracted by $q$, 
then we are done by  Lemma \ref{lemma-start}.
If $C_1^{\q}$ is contracted by $q$, then 
$q(C_1)$ is a point of type $cAx/4$
by Proposition \ref{prop-crepant-contr} and because 
$P^q$ is of index $4$. Then again the assertion follows by Lemma \ref{lemma-start}.

>From now on we assume that $q$ contracts $C^{\q}_0$,
i.e., $K_{X^{\q}}\cdot C^{\q}_0=0$.
In the case \ref{prop-m-1-2} by symmetry and by the above arguments 
we may assume that $q$ contracts $C^{\q}_1$.
Consider the decomposition 
\[
q \colon X^{\q} \stackrel{\varphi}{\longrightarrow} X^{\delta} 
\stackrel{\delta}{\longrightarrow} X, 
\]
where 
$\varphi$ contracts all the $K_{X^{\q}}$-trivial components of $C^{\q}$ except for 
$C^{\q}_0$. Put $C^\delta:=\varphi(C^{\q})$ and $C^\delta_0:=\varphi(C^{\q}_0)$.
Thus $-K_{X^\delta}$ is nef and big over $Z$ and $C^\delta_0$ is the only $K_{X^\delta}$-trivial 
curve on $X^\delta/Z$. 
Let $C^{\delta \delta}:=C^\delta- C^\delta_0$. 
Then $C^{\delta\delta}$ has at least 
two components. 
Let $P:=\delta(C_0^\delta)$ and $R^\delta= C^{\delta \delta}\cap C^\delta_0$.
By Proposition \ref{claim-non-Gorenstein} $R^\delta$ in not Gorenstein.

In the case \ref{prop-m-2}, 
$C^\delta_0$ contains at least three non-Gorenstein points:
$R^\delta$,
$P^\delta:=\varphi(P^{\q})$, and $Q^\delta:=\varphi(Q^{\q})$.
This contradicts Proposition \ref{prop-crepant-contr}.

In the case \ref{prop-m-1-1}, 
$P^\delta:=\varphi(P^{\q})$
is a locally imprimitive point of $(X^{\delta}, C^{\delta}_0)$.
By Proposition \ref{prop-crepant-contr} the singularity
$(X,P=\delta(C^\delta_0))$ is not of type $cA/*$.
If the index of $(X,P)$ is $\ge 4$, then $(X,P)$ is of type $cAx/4$
and we can apply Lemma \ref{lemma-start}.
Thus we assume that $(X,P)$ is of index $2$ and $n=2$.
Let $C_i\subset C$ be a component passing through $P$.
By \cite[Cor. 1.16]{Mori-1988} $(X,C_i)$ is primitive at $P$.
Further, $\xi:=f^*\eta=q_*\xi^{\q}$ is an $2$-torsion element 
of $\Clsc(X,C)$ and is not Cartier at $P$. 
This defines a double \'etale in codimension one cover $(X',C_i')\to (X,C_i)$
which does not splits over $P$.
Hence there is a point $Q\in (X,C_i)$ of even index.
This contradicts the classification \cite[Th. 2.2]{Kollar-Mori-1992}
(cf. \cite{Mori-2007}).

Consider the case \ref{prop-m-1-2}.
Then $P^\delta:=\varphi(P^{\q})$ is a point of index $\ge 4$
(because $\varphi$ is a crepant contraction).
Recall that $\varphi$  contracts $C^{\q}_1$ by our assumption..
Then by Proposition \ref{prop-crepant-contr} $(X^\delta,P^\delta)$
is a point of type $cAx/4$. As in the proof of 
Proposition \ref{prop-crepant-contr}, let 
$D\in |-K_{(X, \delta(P^\delta))}|$ be a general element
and let $S:=\delta^{-1}(D)$.
Then both $D$ and $S$ have only Du Val singularities 
and the contraction $\delta_S\colon S\to D$ is crepant.
Since $(S,P^\delta)$ is not of type $A_*$, 
the germ $(D,P)$ also cannot be of type $A_*$.
Hence, $(X,P)$ is not of type $cA/*$ and so
it is of type $cAx/4$ (because its index is $\ge 4$).
Then the assertion follows by Lemma \ref{lemma-start}.
\end{proof}

\begin{mparag}{Explicit forms.}
By Proposition \ref{prop-last} and Lemma \ref{lemma-start} $f\colon (X,C)\to (Z,o)$
is a quotient of an index-two $\QQ$-conic bundle $f'\colon (X',C')\to (Z',o')$ over a smooth base 
by $\muu_2$, where $\muu_2$ acts on $X'$ and $Z'$ freely in codimension one.
By \cite[Prop. 12.1.10]{Mori-Prokhorov-2006} there is a $\muu_2$-equivariant
diagram
\begin{equation*}
\label{eq-diag-last-2}
\xymatrix{X'\  \ar@{^{(}->}[r] \ar[rd]_{f}& \PP(1,1,1,2)\times \CC^2
\ar[d]^{p}
\\
&\CC^2}
\end{equation*}
where the actions of $\muu_2$ on $(\CC^2,0)\simeq (Z',o')$ and $\PP(1,1,1,2)$
are linear. Further, we can make coordinates $y_1,y_2,y_3, u,v$ in $\PP(1,1,1,2)$
and $\CC^2$ to be semi-invariant. By \cite[Th. 12.1]{Mori-Prokhorov-2006}
$X'$ is given by two semi-invariant equations
\begin{equation*}
\label{eq-eq-index2}
\left\{
\begin{array}{l}
q_1(y_1,y_2,y_3)-\psi_1(y_1,\dots,y_4;u,v)=0,
\\[7pt]
q_2(y_1,y_2,y_3)-\psi_2(y_1,\dots,y_4;u,v)=0,
\end{array}
\right.
\end{equation*}
where $\psi_i$ and $q_i$ are weighted quadratic in $y_1,\dots,y_4$
with respect to $\wt (y_1,\dots,y_4)=(1,1,1,2)$ and
$\psi_i(y_1,\dots,y_4;0,0)=0$. 
Since the action of $\muu_2$ on $Z\simeq \CC^2$ is free outside
$0$, this action is
given by $u\mapsto -u$, $v\mapsto -v$.
Modulo multiplication on $\pm1$ and permutations of 
$y_1,y_2,y_3$, we may assume also that $y_1\mapsto -y_1$, $y_2\mapsto -y_2$,
$y_3\mapsto y_3$.  Otherwise all the points of $\{y_4=0\}\cap C'$
are fixed by $\muu_2$, while $P$ is the only non-Gorenstein on $X$.

The central fiber $C'$ is defined by $q_1=q_2=0$. 
By Lemma \xref{lemma-start} $C'$ has exactly four components and 
$\muu_2$ does not fix any of them. 
Thus we may assume that $C'=\cup C_i'$, $i=1,2,3,4$ and 
$\muu_2$ interchanges $C_1'$ and $C_2'$ (resp. $C_3'$ and $C_4'$).
For any two components $C_i'\neq C_j'$
of $C'$, there is a linear form $l_{i,j}(y_1,\dots,y_3)$ that vanishes along 
$C_i'\cup C_j'$.  Then quadratic forms $l_{1,2}l_{3,4}$, 
$l_{1,3}l_{2,4}$, $l_{1,4}l_{2,3}$ vanish along $C'$
Hence they belong to the pencil $\lambda_1q_1+\lambda_2q_2$
and semi-invariant. This implies that the action of $\muu_2$ on 
the pencil is trivial. Moreover, we can put $q_1=l_{1,3}l_{2,4}$
and $q_2=l_{1,4}l_{2,3}$.
In view of the $\muu_2$-action we may assume 
that  $l_{1,3}=y_1+y_3$, $l_{2,4}=y_1-y_3$,
$l_{1,4}=y_2+y_3$, $ l_{2,3}=y_2-y_3$ 
after some linear coordinate change of $y_1$, $y_2$, $y_3$.

We claim
that $y_4\mapsto - y_4$. 
The arguments below are similar to ones in the proof
of \cite[Lemma 12.1.12]{Mori-Prokhorov-2006}.
Assume 
to the contrary that $y_4\mapsto y_4$.
Let $U\subset \PP(1,1,1,2)$ be the chart $y_4\neq 0$.
Then $U\simeq \CC^3_{z_1,z_2,z_3}/\muu_2(1,1,1)$.
Let $X^\sharp$ be the pull-back of $X\cap (U\times \CC^2_{u,v})$
on $\CC^3_{z_1,z_2,z_3}\times \CC^2_{u,v}$ and let $P^\sharp\in X^\sharp$
be the preimage of $P$.
Since the induced map $X^\sharp\to X$ is \'etale in codimension one,
$(X^\sharp, P^\sharp)\to (X,P)$ is the index-one cover.
Hence $(X^\sharp, P^\sharp)\to (X,P)/\muu_2$ is also the index-one cover
of
the terminal point
$(X,P)/\muu_2$ of index $4$
(the last is true because
the action of $\muu_2$ is free in codimension one).
Hence the morphism is a $\muu_4$-covering by the
structure of terminal  singularities.
However $(X,P)/\muu_2$ is the quotient of $(X^\sharp,P^\sharp)$ by
commuting $\muu_2$-actions:
\[
(z_1,z_2,z_3,u,v)\mapsto(-z_1,-z_2,-z_3,u,v),
(z_1,-z_2,z_3,-u,-v)
\]
This is a contradiction, and we have $y_4 \mapsto -y_4$ as
claimed. This finishes the proof of Theorem \ref{th-m}.
\end{mparag}


\end{document}